\newcommand{\dbtilde}[1]{\accentset{\approx}{#1}}
\newtheorem{theorem}{Theorem}[section]
\newtheorem{prop}[theorem]{Proposition}
\crefname{proposition}{Proposition}{lemmas}
\theoremstyle{definition}
\numberwithin{equation}{section}
\date{}
\begin{document}
\baselineskip=15.5pt

\title{Semisimple algebras of vector fields on $\mathbb{C}^{3}$}

\author[S. Ali]{Sajid Ali$^{1}$}

\address{$^{1}$College of Interdisciplinary Studies, Zayed University, Dubai 19282, UAE}

\email{sajid.ali@zu.ac.ae}

\author[H. Azad]{Hassan Azad$^{2}$}

\address{$^{2}$Abdus Salam School of Mathematical Sciences, GCU, Lahore 54600, Pakistan}

\email{hssnazad@gmail.com}

\author[I. Biswas]{Indranil Biswas$^{3}$}

\address{$^{3}$Department of Mathematics, Shiv Nadar University, NH91, Tehsil
Dadri, Greater Noida, Uttar Pradesh 201314, India}

\email{indranil.biswas@snu.edu.in, indranil29@gmail.com}

\author[F. M. Mahomed]{Fazal M. Mahomed$^{4}$}

\address{$^{4}$DSI-NRF Centre of Excellence in Mathematical and Statistical Sciences, School
of Computer Science and Applied Mathematics, University of the Witwatersrand,
Johannesburg, Wits 2050, South Africa}

\email{Fazal.Mahomed@wits.edu.za}

\author[S. W. Shah]{Said Waqas Shah$^{2}$}

\address{$^{2}$Abdus Salam School of Mathematical Sciences, GCU, Lahore 54600, Pakistan}

\email{waqas.shah@sms.edu.pk}

\begin{abstract}
A local classification of semisimple algebras of vector fields on $\mathbb{C}^{3}$ is given,
using the canonical forms of the Heisenberg algebra and of $sl(2,\mathbb{C})\times sl(2,\mathbb{C})$.
\end{abstract}

\maketitle

\section{Introduction}

Lie classified all finite dimensional algebras of vector fields in the plane up to local
equivalence (see \cite{TTG}). He also attempted a similar classification of vector fields on $\mathbb{C}^3$ but details are
missing. We refer the reader to \cite{Olver} for a detailed history of this problem.

It seems that this problem is intractable as even the nilpotent algebras have no satisfactory classification.
However, semisimple algebras --- and nonsolvable algebras --- do have a satisfactory answer. This problem is
closely related to the classification of 3-d homogeneous spaces. In \cite{Dubrov}, such a classification is
given. In this paper, we do not assume that the Lie algebra is transitive or that the vector fields are
complete. A sketch of the argument is given in Section 2 of this paper. All Lie algebras considered in this
paper are finite dimensional and the classification is up to local equivalence. The main result of this paper are
given in Proposition \ref{prop1}, Proposition \ref{prop2}, Proposition \ref{prop3}, Proposition
\ref{prop4} and Proposition \ref{prop5}.

\section{Sketch of the Main Argument}

The semisimple algebras of vector fields on $\mathbb{C}^{N}$ that have a Cartan algebra of dimension $N$ have
been classified in \cite{ABM1}. Their simple factors can only be of type $A$ and their canonical forms are
given in \cite{ABM2}. Moreover, semisimple algebras of vector fields on $\mathbb{C}^{N}$ have Cartan algebras of
dimension at most $N$. Thus, to describe all semisimple algebras of vector fields on $\mathbb{C}^3$, we need
only consider the algebras of rank at most $2$ (as the case
where rank is three has already been covered). Algebras of rank $2$ are of types $A_{1} \times A_{1}$, $A_{2}$,
$B_{2}$ or $G_{2}$.

Let $V(\mathbb{C}^3)$ denote the Lie algebra of vector fields on $\mathbb{C}^3$.

The Lie algebra $A_2$ contains the 3-d Heisenberg algebra: $[X,\,Y]\,=\,Z$. $B_{2}$ contains
$A_{1} \times A_{1}$, while $G_{2}$ contains an algebra of type $A_2$. However, any type $A_{2}$ in
$V(\mathbb{C}^3)$ cannot be extended to $G_{2}$. The canonical forms of the Heisenberg algebra and
of $A_{1} \times A_{1}$, together with the commutation relations given by the root systems,
are then used in determining the fundamental root vectors to find all the embeddings of rank
two algebras in $V(\mathbb{C}^3)$.

\section{Canonical Forms of the Heisenberg Algebra}

The 3-d Heisenberg algebra is the algebra with basis $X,\,Y$ and $Z$ with $[X,\,Y] \,=\, Z$ and $Z$ in the center of
the algebra. In \cite{Wafo} and \cite{Boyko} the canonical forms of low dimensional algebras
of vector fields are given. We give a derivation of the canonical forms of the Heisenberg algebra because
similar ideas are used in giving the canonical forms of higher dimensional algebras.

\begin{prop}\label{prop1}
If $\mathfrak g$ is a 3-d subalgebra of $V(\mathbb{C}^3)$ with basis $\langle X,\,Y,\,Z\rangle$ such that $[X,\,Y]\,=\,Z$ and $Z$ is in the
center of $\mathfrak g$, then $\mathfrak g$ is equivalent to one of the following forms:
\begin{enumerate}
\item $Z\,=\, \partial_{x}$,\, $X \,=\, \partial_{y}$ and $Y\,=\, y \partial_{x} + \partial_z$;

\item $Z\,=\, \partial_{x}$,\, $X \,=\, \partial_{y}$ and $Y \,=\, y \partial_{x} + \lambda \partial_{y}$;

\item $Z\,=\, \partial_{x}$,\, $X \,=\, \partial_{y}$ and $Y \,=\, y \partial_{x} + z\partial_{y}$.
\end{enumerate}
\end{prop}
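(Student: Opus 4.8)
The plan is to start from the assumption that $Z$ is a nonzero vector field in the center of $\mathfrak g$ and normalize it first. Since $Z$ is a nonvanishing vector field near a generic point (we are working locally), the flow-box theorem lets us choose coordinates $(x,y,z)$ in which $Z=\partial_x$. The conditions $[X,Z]=0=[Y,Z]$ then say that, written in these coordinates, the coefficient functions of $X$ and $Y$ are independent of $x$; so $X$ and $Y$ descend to vector fields on the $(y,z)$-plane plus an $x$-component that is itself a function of $(y,z)$ only. The remaining freedom is the group of coordinate changes preserving $Z=\partial_x$, namely $x\mapsto x+f(y,z)$ together with arbitrary changes of $(y,z)$.

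Next I would exploit $[X,Y]=Z=\partial_x$. Projecting to the $(y,z)$-plane, write $\bar X,\bar Y$ for the images of $X,Y$; then $[\bar X,\bar Y]=0$ because the $x$-component of $[X,Y]$ is the only surviving part. So $\bar X$ and $\bar Y$ are commuting vector fields on $\mathbb C^2$. There are two cases: either $\bar X,\bar Y$ are generically independent, or they are everywhere proportional (including the possibility that one or both vanish). In the independent case, simultaneous flow-box for a pair of commuting independent vector fields lets me assume $\bar X=\partial_y$, $\bar Y=\partial_z$; absorbing the $x$-components and using the residual $x\mapsto x+f(y,z)$ freedom together with $[X,Y]=\partial_x$ pins the $x$-parts down, and one checks this lands in case (1) (possibly after a further elementary normalization). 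In the dependent case I can arrange $\bar X=\partial_y$ (if $\bar X\neq 0$; the subcase $\bar X\equiv 0$ is handled separately and, using $[X,Y]=Z\neq 0$, forces a symmetric treatment with the roles adjusted) and $\bar Y=a(y,z)\partial_y$ for some function $a$; writing $X=\partial_y+p(y,z)\partial_x$ and $Y=a(y,z)\partial_y+q(y,z)\partial_x$, the bracket relation $[X,Y]=\partial_x$ becomes a pair of first-order PDEs for $a,p,q$. Solving them and then using the remaining coordinate freedom to simplify the solution should produce exactly forms (2) (with the parameter $\lambda$, which cannot be scaled away because scaling $Y$ is not allowed once the bracket is normalized) and (3), the two cases corresponding to whether a certain function is constant or can be straightened to the coordinate $z$.

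The main obstacle I anticipate is the bookkeeping in the dependent case: after reducing $\bar X,\bar Y$ to proportional vector fields one still has several functions of two variables ($a$, $p$, $q$, and the normalization function $f$) constrained by the single vector equation $[X,Y]=\partial_x$, and one must carefully track which residual coordinate changes remain available at each stage so as neither to lose the parameter $\lambda$ in case (2) nor to over-normalize. A secondary subtlety is verifying that the three listed forms are genuinely inequivalent (or at least that the stated list is exhaustive and non-redundant for the classification's purposes), which amounts to comparing invariants such as the rank of the distribution spanned by $\{X,Y,Z\}$ at a generic point and the dimension of the span of $\{Z,[X,Y],\ldots\}$ together with the generic orbit dimension. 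I would close by checking each of (1)--(3) directly: compute $[X,Y]$ and confirm it equals $Z=\partial_x$, and confirm $Z$ is central, so that the list indeed consists of Heisenberg subalgebras of $V(\mathbb C^3)$.
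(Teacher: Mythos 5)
Your plan is correct and follows essentially the same route as the paper: straighten $Z=\partial_x$, use centrality to make the coefficients of $X,Y$ independent of $x$, straighten $X=\partial_y$, and then split into cases according to whether $\{X,Y,Z\}$ spans a rank-3 or rank-2 distribution (your independent/proportional dichotomy for the projections $\bar X,\bar Y$ is exactly this rank dichotomy, and your closing inequivalence check via the rank of the span is also the paper's argument). The only difference is organizational --- you phrase the case split through the quotient by the flow of $Z$ rather than through ranks of the subalgebras $\langle X,Z\rangle$, $\langle Y,Z\rangle$ --- and the PDE bookkeeping you flag as the main obstacle does work out exactly as you predict, yielding forms (2) and (3) according to whether the proportionality function is constant.
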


\begin{proof}
Choose (local) coordinates $x,\, y,\, z$ in which $Z\,=\, \partial_{x}$. In these coordinates
\begin{align}
&X \,\,=\,\, a_{1} (y,\,z) \partial_{x} + b_1 (y,\,z) \partial_y + c_{1} (y,\,z) \partial_{z},\nonumber \\
&Y \,\,=\,\, a_{2} (y,\,z) \partial_{x} + b_2 (y,\,z) \partial_y + c_{2} (y,\,z) \partial_{z}.
\end{align}
Now one of $\langle X,\,Z\rangle$ and $\langle Y,\,Z\rangle$ must be of rank two, otherwise
$[X,\,Y]\,=\,0$. Say ${\rm rank}(\langle X,\, Z\rangle)\,=\, 2$. We can introduce local coordinates so
that $X\,=\,\partial_{y}$ and $Z\,=\,\partial_{x}$. In these coordinates, $Y\,=\, a (y,\,z) \partial_{x} + b (y,\,z) \partial_y +
c(y,\,z) \partial_{z}$. Therefore $[X,\,Y]\,=\,Z$ implies that
\begin{align}
\frac{\partial a}{\partial y} \,=\, 1 ,\ \frac{\partial b}{\partial y} \,=\, 0,\ \frac{\partial c}{\partial y} \,=\, 0.
\end{align}
Hence, $Y\,= y\, \partial_x + V$ where $V\,=\, \phi_{1}(z)\partial_{x} +\phi_{2}(z)\partial_{y} +\phi_{3}(z)\partial_{z}$.

\textbf{Case I:}\, {\it The rank of $\langle \partial_x,\, \partial_y,\, V\rangle$ is three.}\, Therefore we can introduce coordinates
$\widetilde{x},\, \widetilde{y},\, \widetilde{z}$ so that $\partial_x \,=\, \partial_{\widetilde{x}}$,
$\partial_y \,=\, \partial_{\widetilde{y}}$ and $v \,=\, \partial_{\widetilde{z}}$. Thus, $\widetilde{x}
\,=\, x+F(z)$, $\widetilde{y}\,=\, y + G(z)$ and $\widetilde{z} \,=\, H(z)$. In these coordinates,
\begin{equation}
Z\,=\, \partial_{\widetilde{x}}\,=\, \partial_x,\ X \,=\, \partial_{\widetilde{y}}\,=\, \partial_y,\ Y
\,=\, y\partial_x +\partial_{\widetilde z} \,=\, (\widetilde{y}- G(\widetilde{z}))\partial_{\widetilde{x}} + \partial_{\widetilde{z}}.
\end{equation}
Thus by change of notation, we may assume that
\begin{equation}
Z\,=\, \partial_{{x}},\ X \,=\, \partial_{{y}},\ Y \,=\, ({y}+f({z}))\partial_{x} + \partial_{{z}}.
\end{equation}
By the change of variables $$\widetilde{x} \,=\, x- \int f(z) dz ,\ \,  \widetilde{y} \,=\, y,\ \, \widetilde{z} \,=\, z,$$ we have
\begin{equation}
\partial_{x} \,=\, \partial_{\widetilde{x}},\ \partial_{y}\,=\, \partial_{\widetilde{y}},\ \partial_{z}
\,=\, -f(z)\partial_{\widetilde{x}} + \partial_{\widetilde{z}}.
\end{equation}
Therefore, $Y\,=\, (y+f(z))\partial_x+\partial_z\,=\,(y+f(z))\partial_{\widetilde x}- f(z)\partial_{\widetilde x}+\partial_{\widetilde z}
\,=\, y \partial_{\widetilde{x}} + \partial_{\widetilde{z}}\,=\, \widetilde{y} \partial_{\widetilde{x}} + \partial_{\widetilde{z}}$.
Thus in these coordinates,
\begin{equation}
Z \,=\, \partial_{\widetilde{x}},\ X \,=\, \partial_{\widetilde{y}},\ Y\,=\, \widetilde{y}\partial_{\widetilde{x}} +
\partial_{\widetilde{z}}.
\end{equation}

\textbf{Case II:} {\it The rank of $\langle \partial_x,\, \partial_y,\, V\rangle$ is two, where $Z\,=\, \partial_x$, $X\,=\,
\partial_y$ and $Y\,=\, y\partial_x+V$ with
$V\,= \phi_{1}(z)\partial_{x} +\phi_{2}(z)\partial_{y} +\phi_{3}(z)\partial_{z}$.}\, This implies that $\phi_{3}(z) \,=\,0$ and
$Y\,=\, (y + \phi_{1}(z))\partial_{x} + \phi_{2} (z) \partial_{y}$. By change of coordinates $\widetilde{x} \,=\, x$,
\, $\widetilde{y}\,=\, y + \phi_{1}(z)$ and $\widetilde{z} \,=\,z$, we have
$$
\partial_{x} \,=\, \partial_{\widetilde{x}},\
\partial_{y}  \,=\, \partial_{\widetilde{y}},\  Y \,=\, \widetilde{y} \partial_{\widetilde{x}} +
\phi_{2} (\widetilde{z}) \partial_{\widetilde{y}}.
$$
Hence, we may assume that
\begin{equation}\label{ep}
Z\,=\, \partial_{x} ,\,\ X \,=\, \partial_{y},\,\ Y \,=\, y \partial_{x}+ \phi(z) \partial_{y}.
\end{equation}

\textbf{Subcase IIa:}\, {\it $\phi(z)$ in \eqref{ep} is a constant.}\, In this case, the
algebra $\mathfrak g$ is $\langle \partial_{x},\, \partial_{y},\, y \partial_{x}\rangle$ and
its centralizer is $A(z) \partial_{x} + B(z) \partial_{z}$ and the centralizer is of rank 2.

\textbf{Subcase IIb:} {\it $\phi(z)$ is not a constant.}\, In this case by the change of
variables $\widetilde{x} \,=\, x$,\, $\widetilde{y} \,=\, y$ and $\widetilde{z} \,=\,
\phi(z)$, the algebra is
\begin{equation}
\langle \partial_{\widetilde{x}},\ \partial_{\widetilde{y}},\ \widetilde{y} \partial_{\widetilde{x}} +
\widetilde{z} {\partial_{y}}\rangle
\end{equation}
and its centralizer is $A(\widetilde{z}) \partial_{\widetilde{x}}$ and it is of rank 1. As the algebra $\langle\partial_{{x}},\, \partial_{{y}},\, {y} \partial_{{x}} + \partial_{z}\rangle$ is
of rank 3 whereas algebras
$\langle\partial_{{x}},\, \partial_{{y}},\, {y} \partial_{{x}} + \lambda {\partial_{y}}\rangle$ and
$\langle\partial_{{x}},\, \partial_{{y}},\, {y} \partial_{{x}} + {z} {\partial_{y}}\rangle$ are of rank 2, we see that
these three forms of Heisenberg algebra are inequivalent.
\end{proof}

\section{Canonical Forms of $sl(2,\mathbb{C})$ in $V(\mathbb{C}^{3})$}

Take a basis $X,\,Y,\, H$ of $sl(2,\mathbb{C})$ with $[H,\,X] \,=\, X,\ [H,\,Y] \,=\, -Y$ and $[X, \, Y] =\, H$. Choose local coordinates $x,\,y,\,z$ in a neighborhood
of a point where $H$ is not zero so that $H \,=\, \partial_{x}$. Thus in these coordinates
\begin{align}
&X \,\,=\,\, e^{x} ( a (y,z) \partial_{x} + b (y,z) \partial_y + c (y,z) \partial_{z}),\nonumber \\
&Y \,\,=\,\, e^{-x} (a_{1} (y,z) \partial_{x} + b_1 (y,z) \partial_y + c_{1} (y,z) \partial_{z}).
\end{align}

\textbf{Case I:}\, Assume that the rank of $\mathfrak g$ is one and we have chosen the coordinates in which $H\,=\,\partial_{x}$.\,
In this case, $X\,=\,e^{x}a(y,z)\partial_{x}$,\, $Y\,=\, e^{-x} a_{1}(y,z)\partial_x$. If $a(y,z)$ is constant then
$[X,\,Y]\,=\,\partial_{x}$ implies that $a_{1}$ is also a constant. Thus,
\begin{equation}
{\mathfrak g}\,\, =\,\, \langle e^{x}\partial_{x},\ e^{-x} \partial_{x},\ \partial_{x}\rangle.
\end{equation}
Now suppose $a$ is not a constant, say $\frac{\partial a}{\partial y} \,\neq\, 0$. By the change of variables $\widetilde{x} \,=\, x$,\,
$\widetilde{y} \,=\, a(y,z)$,\, $\widetilde{z}\,=\, z$,
\begin{equation}
\partial_x\,=\, \partial_{\widetilde x},\ \partial_y\,=\, \frac{\partial a}{\partial y}\partial_{\widetilde y},\
\partial_z\,=\, \frac{\partial a}{\partial z}\partial_{\widetilde y}+\partial_{\widetilde z}.
\end{equation}
Thus
\begin{equation}
X \,= \,e^{\widetilde{x}} \widetilde{y} \partial_{\widetilde{x}},\ Y\,=\, e^{-\widetilde{x}} {a}_1(\widetilde{y},
\,\widetilde{z}) \partial_{\widetilde{x}}.
\end{equation}
By change of notation,
\begin{equation}
X \,=\, e^{{x}} {y} \partial_{{x}},\ Y\,=\, e^{-{x}} {a}_1({y}, {z}) \partial_{{x}}.
\end{equation}
Thus
\begin{equation}
X\,\,=\,\, e^{x+\ln(y)}\partial_x,\ \, Y\,\,=\,\, e^{-x}a_1(y,z)\partial_x.
\end{equation}
If we put $\widetilde{x} \,=\, x+\ln {y},\ \widetilde{y} \,=\, y,\ \widetilde{z} \,=\, z$, then
\begin{equation}
X \,= \, e^{\widetilde{x}}  \partial_{\widetilde{x}},\  Y\,\,= \,\,e^{-(\widetilde{x}-\ln y)}\partial_{\widetilde x}\,=\,
e^{-\widetilde x}\widetilde{y}a_1(\widetilde{y},\widetilde{z})\partial_{\widetilde x}\,=\,
e^{-\widetilde x}a(\widetilde{y},\widetilde{z})\partial_{\widetilde x}.
\end{equation}
As $[X,\,Y] \,=\, H \,=\, \partial_{\widetilde{x}}$, we must have $a(\widetilde{y}, \widetilde{z})$ as constant. Therefore, by a
change of variables
\begin{equation}
{\mathfrak g}\,\, =\,\, \langle e^{x}\partial_{x},\ e^{-x} \partial_{x},\ \partial_{x}\rangle .
\end{equation}

\textbf{Case II:}\, Assume that the rank of $\mathfrak g$ is two.\, Say rank of $\langle X,\,H\rangle$ is two. Thus
we may assume that
\begin{align}
&H\,=\,\partial_{x},\, X\,=\,e^{x}(a(y,z)\partial_{x}+b(y,z)\partial_{y}),\, Y\,=\, e^{-x}(a_1(y,z)\partial_{x}+
b_1(y,z)\partial_{y})
\end{align}
and $b\,\neq\, 0$. Thus there is a change of variables $\widetilde{x},\, \widetilde{y},\, \widetilde{z}$ in which
$\partial_{x}\,=\, \partial_{\widetilde{x}}$, $a(y,z)\partial_{x} + b(y,z)\partial_{y} \,=\, \partial_{\widetilde{y}}$. Hence,
$\widetilde{x} \,= \,x+\phi(y,z)$ and $\widetilde{y},\, \widetilde{z}$ are functions of $y$ and $z$. Thus, in these coordinates
\begin{equation}
X \,=\, e^{\widetilde{x}-\widetilde{\phi}(\widetilde{y}, \widetilde{z})}\partial_{\widetilde{y}},\,\  H\,=\, \partial_{\widetilde{x}}.
\end{equation}
By the change of variables $\dbtilde{x} \,=\, \widetilde{x}$,\, $e^{-\widetilde{\phi}(\widetilde{y},\widetilde{z})} \partial_{\widetilde{y}}
\,=\, \partial_{\dbtilde{y}}$,\, $\widetilde{z} \,= \,\dbtilde{z}$ and by a change of notation, we may assume that
\begin{equation}\label{e2}
X \,=\, e^{{x}} \partial_{{y}},\ H\,=\, \partial_{x},\ Y\,=\, e^{-{x}} ({a}({y}, {z}) \partial_{{x}} +b(y,z) \partial_{y}),
\end{equation}
and $b(y,z)\,\neq\, 0.$

\textbf{Case IIa:}\, \textit{$b(y,\,z)$ in \eqref{e2} is a non-zero constant $k$.}\,  In this case,
$$
\partial_{x} \,\,=\,\, [ e^{x} \partial_{y} ,\ e^{-x}(a\partial_{x} + k\partial_{y})]
$$
gives $a\,=\,0$ and $\frac{\partial a}{\partial y}\,=\, 1$. Thus, this is not possible.

\textbf{Case IIb:}\, \textit{ Assume that $b_{z}\,=\,0$.}\, In this case
$$
H\,=\, \partial_{x},\ X\,=\, e^{x} \partial_{y},\ Y \,=\,
e^{-x}(a(y,z)\partial_{x} + b(y) \partial_{y}).
$$
Hence $[X,\,Y]\,=\, \partial_{x}$ gives $a\,=\,y + \phi(z)$ and $b'(y) \,=\, y+\phi(z).$ Therefore, $\phi(z)$ is a constant, say $k$ and
$a\,=\,y+k$, $b\,=\, \frac{(y+k)^2}{2} +l$. Thus replacing $y$ by $y+k$, we have
$$
X\,=\, e^{x} \partial_{y},\ Y \,=\, e^{-x}\left(y\partial_{x}+ \left(\frac{y^2}{2} +l\right)\right)\partial_{y}.
$$

\textbf{Case IIc:} \textit{ Assume that $b_{z}  \,\neq\, 0.$}\, We have
$X\,=\, e^{x} \partial_y$,\, $Y\,=\, e^{-x} (a(y,z) \partial_x + b(y,z) \partial_y)$.
So $H\,=\, \partial_x\,=\,[X,\,Y]$ gives
$$
    \frac{\partial a}{\partial y}\,\,=\,\,1,\ \ \, \frac{\partial b}{\partial y}\,\,=\,\,a.
$$
Hence, $a\,=\,  y+\phi(z)$,\, $b\,=\, (y+\phi(z))^2/2+\psi(z)$. The transformation
$\widetilde{x}\,=\,x$,\, $\widetilde{y} \,=\, y+ \phi(z)$,\, $\widetilde{z}\,=\,z$ gives
$$
X \,\,=\,\, e^{\widetilde{x}} \partial_{\widetilde{y}},\ \
Y \,\,=\,\, e^{-\widetilde{x}}\left(\widetilde{y} \partial_{\widetilde{x}} + \left ( \frac{\widetilde{y}^2}{2}+ \psi(z)\right) \partial_{\widetilde{y}}\right),
$$
where $\psi'(z) \,\neq\, 0$. By a further change of variables
$\widetilde{x}\,=\,x$,\, $\widetilde{y}\,=\, y$,\, $\widetilde{z}\,=\,\psi(z)$, we have after removing bars,
$$
 X \,\,=\,\, e^{{x}} \partial_{{y}},\ \ \,
Y \,=\, e^{-{x}}\left({y} \partial_{{x}} + \left ( \frac{{y}^2}{2}+ z\right) \partial_{{y}}\right).
$$

\textbf{Case III.} \textit{ Assume that the rank of $\mathfrak g$ is 3}. As before
\begin{align}
&X\,\,=\,\,e^{x}(a(y,z)\partial_{x}+b(y,z)\partial_{y}+c(y,z)\partial_{z}), \nonumber \\
&Y\,\,=\,\, e^{-x}(a_1(y,z)\partial_{x}+b_1(y,z)\partial_{y}+c_{1}(y,z), \partial_{z}) ,\nonumber
\end{align}
and rank of one of $\langle X,\, H\rangle $ or $\langle Y,\, H \rangle $ must be 2, say rank of
algebra $\langle X,\, H\rangle $ is 2. Thus as rank of the commuting vector fields
$\partial_{x}$,\, $(a(y,z)\partial_{x}+b(y,z)\partial_{y}+c(y,z)\partial_{z})$ is 2, there is
a change of variables in which $H\,=\, \partial_{x}\,=\, \partial_{\widetilde{x}}$ and
$(a(y,z)\partial_{x}+b(y,z)\partial_{y}+c(y,z)\partial_{z}) \,=\, \partial_{\widetilde{y}}$.

Following the same argument as in Case II, we may assume that $H\,=\,\partial_{x}$,\, $X\,=\,
e^{x}\partial_{y}$,\, $Y \,=\, e^{-x} (a(y,z)\partial_{x}+b(y,z)\partial_{y}+c(y,z)\partial_{z})$
with $c\,\neq\, 0,$ because of the assumption that rank of $\mathfrak g$ is 3. Hence,
$[X,\,Y]\,=\,H\,=\,\partial_{x}$ implies
$$
    -a + \frac{\partial b}{\partial y}\,=\,0, \ \ \,
\frac{\partial a}{\partial y}=1,\frac{\partial c}{\partial y}\,=\,0.
$$
Therefore, $a\,=\, y + \tau(z)$,\, $b\,=\,y^2/2+\tau(z)y+\mu(z)$ and $c\,=\,c(z)$. As
$c(z)\,\neq\, 0$, we can introduce coordinates
$$
\widetilde{x} \,=\, x,\ \widetilde{y}\,=\,y,\ \partial_{\widetilde{z}}\,=\, c(z) \partial_{z}.
$$
So we may assume, after changing notation, that
\begin{equation}
X \,=\, e^{{x}} \partial_{{y}},\ Y\,=\, e^{-{x}} \left((y+\tau(z)) \partial_{{x}} +\left(\frac{y^2}{2}+\tau(z)y+\mu(z)\right) \partial_{y}+ \partial_{z}\right).
\end{equation}
This can be transformed into the simpler form
\begin{equation}
X \,\,=\,\, e^{\widetilde{x}} \partial_{\widetilde{y}},\ \ \,
Y\,=\, e^{-\widetilde{x}} \left(\widetilde{y} \partial_{\widetilde{x}} +
\frac{\widetilde{y}^2}{2} \partial_{\widetilde{y}}+ \partial_{\widetilde{z}}\right),
\end{equation}
by using the point transformation $\widetilde{x}\,=\,x+ \ln{\alpha(z)}$,\,
$\widetilde{y}\,= \,\alpha(z)y+ \beta(z)$,\, $\widetilde{z}\,=\,h(z)$ where $\alpha,\, \beta,
\, h$ satisfy
$$
\beta \,=\, \alpha \tau + \alpha',\ \frac{\beta^2}{2}\,=\, \alpha^2 \mu + \alpha \beta',\
h'\,=\, \frac{1}{\alpha}.
$$
Summarizing all the above cases, we arrive at the following result.

\begin{prop}\label{prop2}
    Every (local) representation of $sl(2,\mathbb{C})$ in $V(\mathbb{C}^3)$ is equivalent to one of the following forms:
    \begin{enumerate}
        \item $\langle e^{x}\partial_{x},\,\, e^{-x}\partial_{x} \rangle$
        \item $\langle e^{x} \partial_{y},\,\, e^{-x} (y\partial_{x} + (y^2/2 +l) \partial_{z}) \rangle$
        \item $\langle e^{x} \partial_{y},\,\, e^{-x} (y\partial_{x} + (y^2/2 +z) \partial_{z}) \rangle $
        \item $\langle e^{x} \partial_{y},\,\,
e^{-x} (y\partial_{x} + y^2/2 \partial_{y} + \partial_{z}) \rangle $.
    \end{enumerate}
\end{prop}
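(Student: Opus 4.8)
The plan is to follow the strategy used for the Heisenberg algebra in Proposition \ref{prop1}. Since a nonzero representation of the simple algebra $sl(2,\mathbb{C})$ is faithful, $H$ is nonzero on a dense open set; working near such a point I would straighten $H$ to $\partial_x$. The weight relations $[H,X]=X$ and $[H,Y]=-Y$ then become $[\partial_x,X]=X$ and $[\partial_x,Y]=-Y$, which integrate coefficient by coefficient to $X=e^{x}\bigl(a(y,z)\partial_x+b(y,z)\partial_y+c(y,z)\partial_z\bigr)$ and $Y=e^{-x}\bigl(a_1(y,z)\partial_x+b_1(y,z)\partial_y+c_1(y,z)\partial_z\bigr)$. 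Thus the classification reduces to deciding which pairs of profiles $(a,b,c)$, $(a_1,b_1,c_1)$ are compatible with the single surviving relation $[X,Y]=H$, keeping in mind that the coordinate changes still at our disposal are exactly those preserving $H=\partial_x$, that is, $\widetilde x=x+\phi(y,z)$ composed with an arbitrary change of $(y,z)$. I would then split according to the generic rank $r$ of the distribution $\mathcal D$ spanned by $X,Y,H$; since $H=\partial_x\neq 0$, we have $r\in\{1,2,3\}$.

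\textbf{Rank one.} Every element of $\mathfrak g$ is then a function times $\partial_x$, so $X=e^{x}a(y,z)\partial_x$ and $Y=e^{-x}a_1(y,z)\partial_x$. If $a$ is constant, $[X,Y]=H$ forces $a_1$ constant too, and after rescaling we reach form (1). If $a$ is non-constant we may assume (interchanging $y$ and $z$ if necessary) that $\partial a/\partial y\neq 0$, put $\widetilde y=a(y,z)$ to obtain $X=e^{\widetilde x}\widetilde y\,\partial_{\widetilde x}$, and then absorb the factor $\widetilde y$ via $\widehat x=\widetilde x+\ln\widetilde y$, arriving at $X=e^{\widehat x}\partial_{\widehat x}$ with $H=\partial_{\widehat x}$; the bracket relation again forces the remaining coefficient of $Y$ to be a nonzero constant, giving form (1).

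\textbf{Rank two.} At least one of $\langle X,H\rangle$, $\langle Y,H\rangle$ has rank two; say $\langle X,H\rangle$ does, so the $(\partial_y,\partial_z)$-part of the profile of $X$ is a nonzero vector field commuting with $\partial_x$. A flow-box change of the permitted form brings $X$ to $e^{x}\partial_y$ while keeping $H=\partial_x$, after which $\mathcal D=\langle\partial_x,\partial_y\rangle$ forces $Y=e^{-x}\bigl(a(y,z)\partial_x+b(y,z)\partial_y\bigr)$. Now $[X,Y]=H$ reads $\partial a/\partial y=1$, $\partial b/\partial y=a$, so $a=y+\phi(z)$ and $b=y^2/2+\phi(z)y+\psi(z)$; translating $y$ by $\phi(z)$ removes $\phi$. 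If $\psi$ is constant we obtain form (2) with $l=\psi$, while if $\psi$ is non-constant the rescaling $\widetilde z=\psi(z)$ yields form (3). I would also check that the degenerate possibility in which the $\partial_y$-coefficient of $Y$ is a nonzero constant is incompatible with $[X,Y]=H$, so that it does not arise.

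\textbf{Rank three.} The same straightening gives $H=\partial_x$, $X=e^{x}\partial_y$ and $Y=e^{-x}\bigl(a\partial_x+b\partial_y+c\partial_z\bigr)$ with $c\neq 0$; here $[X,Y]=H$ yields $\partial c/\partial y=0$, $\partial a/\partial y=1$, $\partial b/\partial y=a$, whence $c=c(z)$, $a=y+\tau(z)$ and $b=y^2/2+\tau(z)y+\mu(z)$. Rescaling $z$ so that $c\equiv 1$ and then applying the point transformation $\widetilde x=x+\ln\alpha(z)$, $\widetilde y=\alpha(z)y+\beta(z)$, $\widetilde z=h(z)$, with $\alpha,\beta,h$ solving the system $\beta=\alpha\tau+\alpha'$, $\beta^2/2=\alpha^2\mu+\alpha\beta'$, $h'=1/\alpha$, clears $\tau$ and $\mu$ and produces form (4). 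Putting the three cases together exhausts all possibilities and proves the proposition. The step I expect to be the real obstacle is not any individual computation but the bookkeeping in these repeated straightenings: at each stage one must check that the flow-box coordinate change can be taken inside the residual group preserving $H=\partial_x$, and in the rank-three case that the system for $\alpha,\beta,h$ is genuinely solvable near the base point (one integrates a second-order ODE for $\alpha$, then finds $\beta$, then $h$ by a quadrature) and that the resulting map is a local diffeomorphism, in particular that $\alpha$ can be kept nonzero.
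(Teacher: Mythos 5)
Your proposal follows the same route as the paper's own argument: straighten $H$ to $\partial_x$, integrate the weight relations to get the $e^{\pm x}$ prefactors, split into the rank-one, rank-two and rank-three cases, and in each case use coordinate changes preserving $H=\partial_x$ (including the same $\alpha,\beta,h$ system in the rank-three case) to reach the listed normal forms. The details match the paper's proof, including the elimination of the degenerate constant-coefficient subcase, so the proposal is correct and essentially identical in approach.
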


\section{Canonical Forms of $sl(2,\mathbb{C})\times sl(2,\mathbb{C})$ in $V(\mathbb{C}^3)$}

Choose generators $X,\, X_{-},\, Y,\, Y_{-}$ so that if $H\,=\,[X,\,X_{-}]$,\,
$\widetilde{H}\,=\, [Y,\, Y_{-}]$ then
$$
    [H,\,X]\,=\,H,\ [H,\,X_{-}]\,=\, -X,\ [\widetilde{H},\,Y]\,=\,Y,\ [\widetilde{H},\,
Y_{-}]\,=\,-Y,
$$
and $\langle X,\, X_{-} \rangle $ commutes with $\langle Y,\, Y_{-} \rangle $. Choose coordinates
$x,\,y,\,z$ such that $H\,=\, \partial_{x},\, \widetilde{H}\,=\, \partial_{y}$. Then
$$
X\,=\, e^{x} U,\ X_{-}\,=\,e^{-x}V,\ Y\,=\,e^{y}W,\ Y_{-}\,=\, e^{-y}\widetilde{X},
$$
where the coefficients of $U,\,V,\,W,\, \widetilde{X}$ with respect to the basis
$\partial_{x},\, \partial_{y},\, \partial_{z}$ are functions of $z$.

Assume that $\mathfrak g$ is of rank 3. Thus one of the abelian algebras
$\langle \partial_{x}, \partial_{y}, U\rangle $,\, $\langle \partial_{x},\, \partial_{y},
\,W\rangle $,\, $\langle \partial_{x},\, \partial_{y},\, \widetilde{X}\rangle $ must be
of rank 3. Without loss of generality we may assume that $\langle \partial_{x},\,
\partial_{y},\, U\rangle $ is of rank 3. Consequently,
$U\,=\,a(z)\partial_{x}+b(z)\partial_{y}+c(z)\partial_{z}$ with $c(z) \,\neq\, 0$.
Hence there is a change of variables $\widetilde{x},\, \widetilde{y} ,\, \widetilde{z}$, in
which $\partial_{x} \,=\, \partial_{\widetilde{x}}$,\, $\partial_{y} \,=\, \partial_{\widetilde{y}}$,
\, $\partial_{z} \,= \,\partial_{\widetilde{z}} $. Therefore,
$$
\widetilde{x} \,=\, x+\phi(z),\ \widetilde{y} \,=\, y+\psi(z) ,\ \widetilde{z} \,=\, F(x,y,z),
$$
and $c(z)F'\,=\,1$. Hence,
$X\,=\,e^{x} \partial_{\widetilde{z}} \,=\, e^{\widetilde{x}+ \phi(z)} \partial_{\widetilde{z}}
\,=\, e^{\widetilde{x}} e^{\widetilde{\phi}(\widetilde{z})}\partial_{\widetilde{z}}$. We can
write $e^{\widetilde{\phi}(\widetilde{z})}\partial_{\widetilde{z}}\,=\, \partial_{\dbtilde{z}}.$
Therefore, by a change of notation we may assume that
$$
X \,=\, e^{{x}} \partial_{{z}},\ \,Y\,=\, e^{-{x}} \left(a_{1}(z)\partial_{{x}} +a_{2}(z)\partial_{y}+ a_{3}(z)\partial_{z}\right).
$$
Using $[X,\,X_{-}]\,=\,\partial_{x}$, we see that $a_{1}'(z)\,=\,1$,\, $b_{1}'(z)\,=\,0$,\,
$c_{1}'(z)\,=\,a(z)$. Hence, $a(z)\,=\,z+l_{1},$\, $b(z)\,=\,l_{2}$ and $c(z)
\,=\,(z+l_1)^2/2+l_{3}.$ If we put $\widetilde{z}\,=\,z+l_{1}$,\, $\widetilde{x}\,=\,y$,
\, $\widetilde{z}\,=\,z$, we may assume that
$$
X \,\,=\,\, e^{{x}} \partial_{{z}},\ \ \, Y\,\,=\,
\,e^{-{x}} \left(z\partial_{{x}} +l_{2}\partial_{y}+ (\frac{z^2}{2}+l_{3})\partial_{z}\right).
$$
Thus we may assume that
$$
X_{-} \,\,=\,\, e^{-{x}} \left(z\partial_{{x}} +\alpha \partial_{y}+
\left (\frac{z^2}{2}+\beta \right )\partial_{z}\right),
$$
where $\alpha$ and $\beta $ are constants. Now $Y\,=\,e^{y} (h_{1}(z)\partial_{x}+
h_{2}(z)\partial_{y}+ h_{3}(z)\partial_{z})$ and it commutes with $X$ and $X_{-}$. Therefore,
$$
h_{1}'\,=\,0, \ h_{2}'\,=\,0,\ h_{3}'\,=\, h_{1}.
$$
Hence, $h_{1}\,=\,c_{1},\, h_{2}\,=\,c_{2},\, h_{3} \,=\, c_{1}z +c_{3}$. Therefore, $Y
\,=\,e^{y} (c_1\partial_{x}+c_2\partial_{y}+ (c_{1}z+c_{3}))\partial_{z})$. One can check that
$c_1$ and $c_2$ both can't be zero. Therefore, if $c_1\,=\,0$, then $c_2 \,\neq\, 0$
and $Y\,=\,e^{y}(\partial_{y}+ \lambda \partial_{z})$ (up to a scalar). We now have
$X \,=\, e^{{x}} \partial_{{z}}$,\, $Y\,=\, e^{y} \left(\partial_{y}+
\lambda \partial_{z}\right)$,\, $X_{-} \,=\,
e^{-{x}} \left(z\partial_{{x}} +\alpha \partial_{y}+ \left (z^2/2+\beta \right )
\partial_{z}\right)$ and $[Y,\,X_{-}]\,=\,0$ gives $\lambda \,=\,0,\, \alpha \,=\,0.$
Consequently,
$$
X \,=\, e^{{x}} \partial_{{z}},Y= e^{y} \partial_{y},\ \, X_{-}
\,=\, e^{-{x}} \left(z\partial_{{x}} + \left (\frac{z^2}{2}+\beta \right )\partial_{z}\right).
$$
Now $[X,\,Y_{-}]\,=\,0$ and $[Y,\, Y_{-}]$ is a non-zero multiple of $\partial_{y}$ gives
$Y_{-}\,=\, e^{-y}\partial_{y}.$ This gives one form of $sl(2,\mathbb{C})\times sl(2,\mathbb{C})$:
$$
X\, =\, e^{{x}} \partial_{{z}},\, Y\,=\, e^{{y}} \partial_{y},\, X_{-}
\,=\, e^{-{x}} \left(z\partial_{{x}} + \left (\frac{z^2}{2}+\beta \right )\partial_{z}\right),
\, Y_{-} \,= \,e^{-y} \partial_y.
$$
Assuming $c_{1}\,\neq\, 0$, we may assume that $Y\,=\, e^{{y}}( \partial_{x} + l_{1}
\partial_{y} + (z+l_{2})\partial_{z})$ and $X\,=\,e^{x}\partial_{z}$,\,
 $X_{-}\,=\, e^{-{x}} \left(z\partial_{{x}} + a\partial_{y}+ (z^2/2 +b) \partial_{z}\right)$.
Using $[Y,\,X_{-}]\,=\,0$, we must have $a\,=\,l_{2}$ and $a+al_{1}\,=\,0$ and
$a^2 +2b\,=\,0.$ Considering the cases $a\,=\,0$, $a\,\neq\, 0$ and using the commutation relations, we get
the following:
\begin{prop}\label{prop3}
The possible forms of all algebras of type $sl(2,\mathbb{C})\times sl(2,\mathbb{C})$ in $V(\mathbb{C}^3)$ are
\begin{align}
1.\quad &X = e^{{x}} \partial_{{z}},\, Y= e^{{y}} \partial_{y},\, X_{-} =
e^{-{x}} \left(z\partial_{{x}} + \left (\frac{z^2}{2}+\beta \right )\partial_{z}\right),\,
Y_{-} = e^{-y} \partial_y
\nonumber \\
2. \quad &X = e^{{x}} \partial_{{z}},\, Y= e^{{y}} (\partial_{x}-\partial_{y}+z\partial_{z}),\,
 X_{-} = e^{-{x}} \left(z\partial_{{x}} +  \frac{z^2}{2}\partial_{z}\right),
\nonumber \\
&Y_{-}= e^{-y}(\partial_{x} + \partial_{y}+z\partial_{z})
\nonumber \\
3. \quad &X = e^{{x}} \partial_{{z}},\,Y= e^{{y}} (\partial_{x}-\partial_{y}+(z+a)\partial_{z}),\,
X_{-} = e^{-{x}} \left(z\partial_{{x}} + a\partial_{y}+  \left(\frac{z^2}{2}+b\right)\partial_{z}
\right),\nonumber\\
&Y_{-}= e^{-y}(\partial_{x} + \partial_{y}+(z-a)\partial_{z}), \ a^{2} + 2b = 0
\nonumber \\
4. \quad & \mbox{If the rank of $\mathfrak g$ is 2, then g = }~~ \langle e^x \partial_x,\,
e^{-x} \partial_x \rangle \times \langle e^{y} \partial_y,\, e^{-y} \partial_y\rangle. \nonumber
\end{align}
\end{prop}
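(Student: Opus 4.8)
The plan is to prove the proposition by a case analysis on the rank of $\mathfrak g$ as a family of vector fields on $\mathbb{C}^{3}$, that is, the dimension of the span of $\{V(p):V\in\mathfrak g\}$ at a generic point $p$; this rank is automatically at most $3$, and it is convenient to treat separately the cases rank $3$ and rank $2$ (rank $\leq 1$ being impossible, since the centralizer in $V(\mathbb{C}^{3})$ of a maximal one-direction $sl(2,\mathbb{C})$ cannot contain a second copy). For rank $3$ essentially all the work has already been carried out above, so the proof mainly consists of finishing that case split and then disposing of rank $2$ and of pairwise inequivalence.

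\textbf{Rank 3.} I would resume from the normal form reached above: $X=e^{x}\partial_{z}$, $X_{-}=e^{-x}\bigl(z\partial_{x}+\alpha\partial_{y}+(\tfrac{z^{2}}{2}+\beta)\partial_{z}\bigr)$ with $\alpha,\beta$ constants, and $Y=e^{y}W$, where the weight relations force the coefficients of $W$ to depend only on $z$ and $[X,Y]=0$ then forces $W=c_{1}\partial_{x}+c_{2}\partial_{y}+(c_{1}z+c_{3})\partial_{z}$; imposing $[X_{-},Y]=0$ yields $c_{3}=\alpha c_{1}$, $\alpha(c_{1}+c_{2})=0$ and $c_{1}(\alpha^{2}+2\beta)=0$. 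First one records that $(c_{1},c_{2})\neq(0,0)$, since otherwise $[Y,Y_{-}]$ could not equal the nonzero multiple $\widetilde{H}=\partial_{y}$. If $c_{1}=0$ then $c_{2}\neq0$; rescaling gives $Y=e^{y}(\partial_{y}+\lambda\partial_{z})$, $[X_{-},Y]=0$ forces $\lambda=0$ and $\alpha=0$, and then $Y_{-}$ is pinned down (using $[X,Y_{-}]=[X_{-},Y_{-}]=0$, $[\widetilde{H},Y_{-}]=-Y_{-}$ and $[Y,Y_{-}]=\partial_{y}$) to $e^{-y}\partial_{y}$, giving Form $1$. If $c_{1}\neq0$ one normalizes $c_{1}=1$, so $Y=e^{y}(\partial_{x}+l_{1}\partial_{y}+(z+l_{2})\partial_{z})$ with $l_{2}=\alpha$, $\alpha(1+l_{1})=0$ and $\alpha^{2}+2\beta=0$: the subcase $\alpha=0$ leads to Form $2$ and the subcase $\alpha\neq0$ (forcing $l_{1}=-1$, $\beta=-\alpha^{2}/2$) to Form $3$. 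In each subcase one writes the general $Y_{-}=e^{-y}W_{-}$ allowed by $[X,Y_{-}]=0$ --- which makes the $\partial_{x}$- and $\partial_{y}$-coefficients constant and the $\partial_{z}$-coefficient affine in $z$ --- then imposes $[X_{-},Y_{-}]=0$ and $[Y,Y_{-}]\in\mathbb{C}^{\times}\partial_{y}$ to determine $W_{-}$, and finally checks that the quadruple $(X,X_{-},Y,Y_{-})$ closes into $sl(2,\mathbb{C})\times sl(2,\mathbb{C})$; this verification is purely computational.

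\textbf{Rank 2.} By Proposition \ref{prop2} each $sl(2,\mathbb{C})$ factor has rank $1$ or $2$; a factor of rank $2$ is transitive on its $2$-dimensional leaves, and the centralizer of such an action inside the vector fields on a leaf is at most one-dimensional, hence cannot accommodate a commuting second copy of $sl(2,\mathbb{C})$ without raising the total rank to $3$. So both factors have rank $1$; by Proposition \ref{prop2}(1) each equals $\langle e^{u}\partial_{u},e^{-u}\partial_{u},\partial_{u}\rangle$ in suitable coordinates, and since $[H,\widetilde{H}]=0$ the two commuting vector fields $H$, $\widetilde{H}$ are straightened simultaneously, after which the respective $e^{\pm}$-eigenvectors are determined and one obtains Form $4$. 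Finally the four forms are pairwise inequivalent: Form $4$ has rank $2$ while Forms $1$--$3$ have rank $3$, and Forms $1$--$3$ are separated by an invariant such as the dimension of the singular locus of the associated distribution, or the isomorphism type of the generic isotropy subalgebra.

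The step I expect to be the main obstacle is the parameter bookkeeping of the rank-$3$ case: verifying $(c_{1},c_{2})\neq(0,0)$, then showing that the vanishing of all the cross-brackets together with the existence of a fourth generator $Y_{-}$ having the correct $sl_{2}$-brackets really does pin down (or allow one to normalize) all the surviving constants, and that the branching $c_{1}=0$ / $c_{1}\neq0$ and $\alpha=0$ / $\alpha\neq0$ is exhaustive. The coordinate straightenings and the closing check that the brackets close are mechanical, and the rank-$2$ argument needs only the centralizer remark noted above.
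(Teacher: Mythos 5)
Your proposal follows essentially the same route as the paper: straighten $H=\partial_x$, $\widetilde H=\partial_y$ simultaneously, normalize $X=e^{x}\partial_z$ in the rank-3 case, determine $X_-$ and then $Y$ from the weight relations and vanishing cross-brackets, and branch on $c_1=0$ versus $c_1\neq 0$ and then $\alpha=0$ versus $\alpha\neq 0$ --- this is exactly the paper's case analysis, with matching constraint equations. Your treatment of the rank-2 case and of pairwise inequivalence is somewhat more explicit than the paper's (which simply asserts Form 4), but it does not change the argument.
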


\section{Simple Algebras of rank 2 in $V(\mathbb{C}^{3})$}

In order to classify all semisimple Lie algebras of vector fields in $\mathbb{C}^{3}$, it
remains, in view of Propositions 2 and 3, to classify forms of types $A_{2},B_{2}$ and $G_{2}$ in
$V(\mathbb{C}^{3})$.

We do this by extending forms of the Heisenberg algebra $H \subset A_{2}$, $A_{1}\times A_{1}
\,\subset\, B_{2} $ and $A_{2}\,\subset\, G_{2}$. For this we need to make a choice of structure
constants that are given by a model of these types of algebras or using the following idea.

\textbf{Algorithm:} Let $R$ be a root system.
\begin{enumerate}
    \item Fix a simple system $S$ of roots, say, $S\,=\,\{ \alpha_{1},\, \alpha_{2},
\, \cdots, \,\alpha_{N}\}$.

\item For a positive root $r$ which is not simple, let $\alpha_{i}$ be the first simple root so that $r-\alpha_{i}$ is a positive root.

\item Define $X_{r}$ inductively by $X_{r} \,=\, [ X_{\alpha_{i}},\, X_{r-\alpha_{i}}]$ and
$X_{-r} \,=\, [ X_{-\alpha_{i}},\, X_{-r+\alpha_{i}}]$.

\item  If $\alpha$ is a simple root and we define $H_{\alpha} \,=\, [X_{\alpha},\, X_{-\alpha}]$
then $[H_{\alpha},\, X_{\beta}] \,=\, \langle \beta,\, \alpha \rangle X_{\beta}$, where
$\langle \beta,\, \alpha \rangle$ is the Cartan integer for the pair $(\beta, \alpha)$.

\item All the structure constants $[X_{r},\, X_{s}] \,=\, N_{r,s} X_{r+s}$ are determined for all roots.
\end{enumerate}
We omit the proof as it reduces to rank 2 computations.

\subsection{Forms of $A_{2}$ in $V(\mathbb{C}^{3})$}

\begin{prop}\label{prop4}
If $\alpha$ and $\beta$ are the simple roots an algebra of type $A_{2}$  then every embedding of a Lie algebra of type $A_{2}$ in $V(\mathbb{C}^{3})$ is equivalent by a local change of variables to one of the following forms:
\begin{enumerate}
\item $X_{\alpha}\, =\, \partial_y,\, X_{\beta} \,=\, y \partial_{x},\, X_{-\alpha}
\,=\, -xy\partial_{x} -y^2 \partial_{y},\, X_{-\beta} \,=\, x \partial_{y}$

\item $X_{\alpha} \,=\, \partial_y,\, X_{\beta} \,=\, y \partial_{x},\, X_{-\alpha}
\,=\, -xy\partial_{x} -y^2 \partial_{y}+y\partial_{z}, X_{-\beta} \,=\, x \partial_{y}$

\item $X_{\alpha} \,=\, \partial_y,\, X_{\beta}
\,=\, y \partial_{x}+\partial_{z},\, X_{-\alpha} \,=\, -xy\partial_{x} -y^2 \partial_{y}+(yz-x)
\partial_{z},\, X_{-\beta} \,=\, x \partial_{y}-z^2 \partial_{z}$.
\end{enumerate}
\end{prop}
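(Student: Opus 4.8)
The plan is to treat this as an extension problem, as announced in Section~2: the three positive root vectors of $A_2$ already span a $3$-dimensional Heisenberg algebra, whose local normal forms are listed in Proposition~\ref{prop1}, so it remains to determine, for each such normal form, the Cartan elements and the negative root vectors compatible with the $A_2$ commutation relations.

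Write $\alpha,\beta$ for the simple roots, so the positive roots are $\alpha,\beta,\alpha+\beta$. After rescaling we may take $X_{\alpha+\beta}=[X_\alpha,X_\beta]$, and then $\mathfrak n=\langle X_\alpha,X_\beta,X_{\alpha+\beta}\rangle$ is a Heisenberg algebra with centre $\langle X_{\alpha+\beta}\rangle$, since $\alpha+2\beta$ and $2\alpha+\beta$ are not roots. Composing the given embedding with the diagram automorphism of $A_2$ interchanges $\alpha$ and $\beta$ while leaving the underlying subalgebra of $V(\mathbb C^3)$ unchanged, so after possibly renaming the simple roots we may assume that $\langle X_\alpha,X_{\alpha+\beta}\rangle$ has rank $2$; Proposition~\ref{prop1} then supplies a local coordinate change in which $X_{\alpha+\beta}=\partial_x$, $X_\alpha=\partial_y$, and $X_\beta$ equals one of $y\partial_x+\partial_z$, $y\partial_x+\lambda\partial_y$, or $y\partial_x+z\partial_y$. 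The remaining generators to be found are $X_{-\alpha}$ and $X_{-\beta}$ (each written as $f_1\partial_x+f_2\partial_y+f_3\partial_z$ with $f_i=f_i(x,y,z)$), and with them $H_\alpha=[X_\alpha,X_{-\alpha}]$, $H_\beta=[X_\beta,X_{-\beta}]$ and $X_{-\alpha-\beta}=[X_{-\alpha},X_{-\beta}]$.

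Next I would impose the $A_2$ relations on these unknowns. Bracketing with $X_{\alpha+\beta}=\partial_x$ and with $X_\alpha=\partial_y$ differentiates coefficients in $x$ and in $y$; since $[X_\alpha,X_{-\beta}]=0$, $[X_{\alpha+\beta},X_{-\alpha}]$ is a multiple of $X_\beta$, $[X_{\alpha+\beta},X_{-\alpha-\beta}]$ is a multiple of $H_\alpha+H_\beta$, $[X_\alpha,X_{-\alpha}]=H_\alpha$, and $[H_\gamma,X_\delta]=\langle\delta,\gamma\rangle X_\delta$ with the Cartan integers of $A_2$, these conditions form a triangular system of first-order linear partial differential equations that determines the $f_i$ up to functions of $z$ and finitely many constants; the bracket $[X_\beta,\,\cdot\,]$ (which introduces $y$, and in the third case also $z$), together with $[X_\beta,X_{-\beta}]=H_\beta$ and $[X_{-\alpha},X_{-\beta}]=X_{-\alpha-\beta}$, then closes the system. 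The residual coordinate freedom---the stabiliser of the already-normalised algebra $\mathfrak n$, which for $X_\beta=y\partial_x+\lambda\partial_y$ consists of the maps $\widetilde x=x+p(z)$, $\widetilde y=y$, $\widetilde z=h(z)$, with correspondingly smaller stabilisers in the other two cases, together with the torus of admissible rescalings of the root vectors---is then used to eliminate the $z$-dependent functions and normalise the constants, exactly as in the proofs of Propositions~\ref{prop1} and \ref{prop2}. Carrying this out, the branch $X_\beta=y\partial_x+z\partial_y$ turns out to be incompatible with the relations and does not extend to $A_2$; the branch $X_\beta=y\partial_x+\lambda\partial_y$ forces $\lambda=0$ and then splits, according to whether $\partial_z$ occurs in $X_{-\alpha}$, into forms (1) and (2); and the branch $X_\beta=y\partial_x+\partial_z$ yields form (3). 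A final routine check confirms that in each case the relations not used as constraints hold automatically.

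It remains to see that the three forms are pairwise inequivalent. In form (1) no vector field involves $\partial_z$, so the algebra has rank $2$, whereas forms (2) and (3) have rank $3$; and the nilradical of a Borel subalgebra, which is a local invariant of the embedding since all Borels are conjugate under the adjoint group (and that conjugation is realised by a local diffeomorphism of $\mathbb C^3$), has rank $2$ in form (2) (it is case (2) of Proposition~\ref{prop1}) but rank $3$ in form (3) (case (1) of Proposition~\ref{prop1}). \textbf{The main obstacle} I expect is the second step: keeping the differential system triangular, integrating it while carefully accounting for and using up exactly the residual coordinate freedom, and disposing of the non-extending branch without either losing a case or introducing a spurious one.
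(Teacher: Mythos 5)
Your proposal follows essentially the same route as the paper: the positive root vectors form a Heisenberg algebra, one puts it into one of the normal forms of Proposition~\ref{prop1}, and then the $A_2$ commutation relations are imposed to solve for $X_{-\alpha}$, $X_{-\beta}$ and normalise by the residual coordinate freedom; your predicted outcomes (the $y\partial_x+z\partial_y$ branch fails, $\lambda=0$ splits into forms (1) and (2), the rank-three branch gives form (3)) agree with the paper, which itself only writes out the computation for form (3). The diagram-automorphism reduction and the inequivalence argument via rank and the rank of the positive nilradical are small additions beyond what the paper records, and are sound.
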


\begin{proof}
The idea of the proof is to extend the forms of the algebra generated by $X_{\alpha}$ and
$X_{\beta}$ - namely $X_{\alpha},\, X_{\beta},\, [X_{\alpha},\,X_{\beta}]\,
=\, X_{\alpha+\beta}$ to the algebra $\langle X_{\alpha},\, X_{\beta},\, X_{-\alpha},\, X_{-\beta}\rangle$. We give details of only Type (3) as the other forms follow using similar steps.

Type (3) of the algebra $\langle X_{\alpha} ,\, X_{\beta} \rangle $ is
$$
X_{\alpha} \,=\, \partial_{y} ,\, X_{\beta} \,=\, y \partial_{x} + \partial_{z},\,
X_{\alpha +\beta} \,=\, \partial_{x}.
$$
Using $[X_{-\beta},\, X_{\alpha} ] \,=\,0$ and $[ X_{\alpha +\beta},\, X_{-\beta}]
\,=\, X_{\alpha}$, we see that
$$
X_{-\beta} \,=\, f_{1}(z) \partial_{x}+ x\partial_{y}+f_{3}(z) \partial_{z}.
$$
With $[X_{\beta},\, X_{-\beta}]\,=\,H_{\beta}$,\, $[H_{\beta},\, X_{\alpha} ]
\,=\, - X_{\alpha}$ and $[X_{\beta}, \,H_{\beta}]\,=\,-2X_{\beta}$, we see that $f_{1}''
\,=\,0$,\, $f_{3}''\,=\,-2$. So $f_{1}\,=\, az+b$,\, $f_{3} \,= \,-z^2 +cz+d$. Hence,
\begin{equation}
X_{-\beta} \,\,=\,\, (az+b) \partial_{x} + x \partial_{y} - \left ( \left (z-\frac{c}{2} \right)^2
 +d - \frac{c^2}{4}\right) \partial_{z}.
\end{equation}
If we put $\widetilde{z} \,=\, z-c/2$, we may assume, after a change of notation, that
$$
X_{-\beta} \,=\, (az+b) \partial_{x} + x \partial_{y} - \left (  z ^2 +k\right) \partial_{z}.
$$
Hence, $H_{\beta} \,=\, y\partial_{y} - (x-a) \partial_{x} -2z \partial_{z}$. Now $X_{-\alpha}
\,=\, F_{1} \partial_{x} + F_{2} \partial_{y} +F_{3} \partial_{z}$. Using $[X_{\alpha+\beta} ,
\, X_{-\alpha}]\,=\,-X_{\beta}$ and $[X_{-\alpha},\, X_{\beta} ] \,=\,0$, we see that
$$
X_{-\alpha} \,\,=\,\, \left ( -xy +(G_{2}(y) + y^2)z +  \phi(y) \right ) \partial_{x} + G_{2}(y) \partial_{y} + (-x+yz+G_{3}(y)) \partial_{z}.
$$
As $X_{\alpha} \,=\, \partial_{x}$, and $H_{\alpha} \,=\, [X_{\alpha} ,\, X_{-\alpha}]$
and $[X_{\alpha},\, H_{\alpha}]\,=\, -2X_{\alpha} \,=\, -2 \partial_{y}$, we see that $G_{2}''
\,=\,-2$,\, $G_{3}''\,=\,0$ and $\phi''\,=\,0$. Finally using that $[X_{-\alpha} ,\, X_{-\beta}]
\,=\, X_{-\alpha -\beta}$ commutes with $X_{-\alpha}$ and $X_{-\beta}$ and $[[X_{r},\,
X_{-r}],\, X_{s}]\,=\, \langle s,\,r \rangle X_{s}$, where $r \,=\, \alpha,\, \beta$ and
$s\,=\,\pm \alpha,\, \pm \beta$, we get a system of equations whose solution gives
$$
X_{-\alpha} \,=\,  -xy \partial_{x} -y^2 \partial_{y} + (gy+yz-x) \partial_{z},\ \,
 X_{-\beta} \,=\,  b \partial_{x} +x\partial_{y} - (z+g)^2 \partial_{z}.
$$
Using $[X_{-\alpha-\beta},\, X_{-\beta}]\,=\, 0$, we find that $b\,=\,0$. Finally putting
$\widetilde{z} \,=\, z+g$ and changing notation we get
$$
X_{-\alpha} \,=\,  -xy \partial_{x} -y^2 \partial_{y} + (yz-x) \partial_{z},\ \,
 X_{-\beta} \,=\,  x\partial_{y} - z^2 \partial_{z}.
$$
This completes the proof.
\end{proof}

\subsection{Forms of $B_{2}$ in $V(\mathbb{C}^3)$}

\begin{prop}\label{prop5}
If $\alpha$ and $\beta$ are the simple roots of an algebra of type $B_{2}$ with $\alpha$ a long root, then every embedding of an algebra of type $B_{2}$ in $V(\mathbb{C}^3)$ is equivalent by a local change of
coordinates to one of the following two:
\begin{align}
     &X_{\alpha}\,=\, e^{x}(\partial_x+\partial_y+z\partial_z) , X_{-\alpha}= e^{-x}(-\partial_x+\partial_y+z\partial_z) \nonumber \\
      &X_{\beta}\,=\, e^{\frac{-x+y}{2}}\left(\partial_x-\partial_y- \left(z+\frac{1}{4}\right)\partial_z\right) \nonumber\\
      &X_{-\beta} \,=\, e^{\frac{x-y}{2}} \left(z\partial_x+\left(z+\frac{1}{2} \right )\partial_y+\left(z^2+\frac{z}{4}\right)\partial_z\right) \nonumber
\\
& \quad \mbox{or} \nonumber
\\
&
X_{\alpha}\,=\, e^{x}(-\partial_x+\partial_y+(z+a)\partial_z) \nonumber \\
     &X_{-\alpha}\,=\, e^{-x}(\partial_x+\partial_y+(z-a)\partial_z) , X_{\beta}= e^{\frac{-x+y}{2}}(\partial_y+ (z-a)\partial_z) \nonumber\\
      &X_{-\beta} \,=\, e^{\frac{x-y}{2}}\left(\partial_x+\left(\frac{z+a}{2a}-1 \right)\partial_y+ \left(\frac{(z+a)^2}{2a} - (z+a) \right) \partial_z \right) \nonumber
\end{align}
\end{prop}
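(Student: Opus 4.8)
### Proof Proposal for Proposition \ref{prop5}

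The plan is to mirror the strategy used for $A_2$ in Proposition \ref{prop4}: start from a canonical form of the rank-2 subsystem that sits inside $B_2$ — here the subalgebra $A_1 \times A_1$ generated by the long root $\alpha$ and the root $\alpha + 2\beta$ (the two orthogonal long roots), whose canonical forms are supplied by Proposition \ref{prop3} — and then extend it by solving for the remaining root vectors $X_{\pm\beta}$ using the structure constants dictated by the $B_2$ root system and the Algorithm of Section 5. First I would fix the simple system $\{\alpha, \beta\}$ with $\alpha$ long, list the positive roots $\alpha,\, \beta,\, \alpha+\beta,\, \alpha+2\beta$, and record the Cartan integers $\langle \beta,\alpha\rangle = -1$, $\langle \alpha,\beta\rangle = -2$, together with the commutation relations $[X_\alpha, X_\beta] = N X_{\alpha+\beta}$, $[X_{\alpha+\beta}, X_\beta] = N' X_{\alpha+2\beta}$, $[X_\alpha, X_{\alpha+2\beta}] = 0$ (since $2\alpha + 2\beta$ is not a root), and the relations $[H_\gamma, X_\delta] = \langle \delta,\gamma\rangle X_\delta$ for $H_\gamma = [X_\gamma, X_{-\gamma}]$.

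The first substantive step is to nail down $X_\alpha, X_{-\alpha}, X_{\alpha+2\beta}, X_{-\alpha-2\beta}$ from the $A_1 \times A_1$ classification. By Proposition \ref{prop3}, up to local change of coordinates this pair of commuting $sl(2,\mathbb{C})$'s is one of forms 1, 2, 3 of that proposition (form 4, the rank-2 case, cannot occur inside a rank-3 $B_2$, and one checks quickly that form 1 — where one $sl(2)$ lives in a single variable as $e^x\partial_x$ — is incompatible with the $B_2$ relations forcing the rank to be $3$). That leaves forms 2 and 3 of Proposition \ref{prop3} as the two candidate starting points, which is exactly why the statement has two cases. For each, I then set $X_\beta = e^{(-x+y)/2}(p(z)\partial_x + q(z)\partial_y + r(z)\partial_z)$ with the exponential prefactor forced by the eigenvalue equations $[H_\alpha, X_\beta] = -X_\beta$ and $[H_{\alpha+2\beta}, X_\beta] = -X_\beta$ (after renormalizing the long-root Cartan elements so that $\widetilde H = [Y, Y_-]$ corresponds to $\alpha + 2\beta$). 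The coefficient functions $p, q, r$ are then pinned down by the chain of bracket conditions: $[X_\alpha, X_\beta]$ and $[X_{\alpha+2\beta}, X_\beta]$ must be the correct root vectors $X_{\alpha+\beta}$ (appearing with the right $H$-weights), $[X_\beta, X_\alpha + \text{(appropriate combination)}]$ closes up, and finally $H_\beta = [X_\beta, X_{-\beta}]$ must satisfy $[H_\beta, X_\alpha] = \langle \alpha, \beta\rangle X_\alpha = -2X_\alpha$ and $[H_\beta, X_{\alpha+2\beta}] = -2 X_{\alpha+2\beta}$. This produces a determined (generally overdetermined) system of ODEs in $z$ for $p, q, r$ and the analogous data for $X_{-\beta}$; solving it, together with a final normalizing change of the $z$-coordinate (the shifts $z \mapsto z + \text{const}$ and the rescaling absorbing the constants $a, b$ subject to $a^2 + 2b = 0$), yields the two displayed forms.

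The main obstacle I anticipate is bookkeeping rather than conceptual: keeping the normalization of the four Cartan generators consistent (the long-root $sl(2)$ acts with Cartan integer $-2$ on $X_{\pm\beta}$ but $\beta$ is short, so there is a factor-of-two asymmetry that has to be tracked carefully through every bracket), and verifying that the candidate solutions genuinely satisfy \emph{all} the $B_2$ relations — in particular the Serre-type / Jacobi constraints coming from $[X_{\alpha+\beta}, X_{-\beta}] = c\, X_\alpha$ and $[X_\beta, X_{\alpha+2\beta}] = 0$, which are what ultimately force the specific fractional coefficients $\tfrac14, \tfrac12$ (resp. $\tfrac{1}{2a}$) appearing in the statement. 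A secondary subtlety is showing the two forms are genuinely inequivalent and that no further case-splits arise — this follows by comparing invariants of the underlying $A_1\times A_1$ (the first form comes from Proposition \ref{prop3}(2), which has a free parameter that is then eliminated, while the second comes from Proposition \ref{prop3}(3) with its constraint $a^2 + 2b = 0$, and the two have non-conjugate isotropy data). Once the $A_1 \times A_1$ starting cases are correctly reduced to exactly these two, the rest is the same style of linear-ODE-plus-coordinate-change argument already carried out in detail for Type (3) of Proposition \ref{prop4}, so I would present only the computation for the first form and remark that the second is entirely analogous.
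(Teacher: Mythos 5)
Your proposal follows essentially the same route as the paper: take the $A_1\times A_1$ generated by the long roots $\alpha$ and $\alpha+2\beta$ in the canonical forms of Proposition \ref{prop3}, discard the rank-two form and the form with a one-variable factor (exactly the two the paper eliminates because they force $X_\beta=0$ or $X_{\alpha+\beta}=0$), and extend the remaining two by writing $X_{\pm\beta}=e^{\pm(-x+y)/2}(\cdots)$ as weight vectors and solving the resulting ODE system from the $B_2$ bracket relations. The only cosmetic difference is that the paper pins down the structure constants and the exponents $\pm 1/2$ via an explicit model of $B_2\cong C_2$ inside $sl(4,\mathbb{C})$, whereas you derive them from the Cartan integers and the normalization of the two $sl(2)$ factors; these are equivalent.
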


\begin{proof}
The algebra $\langle X_{\alpha},\, X_{-\alpha} \rangle \times \langle X_{\alpha +2\beta},\,
X_{-\alpha-2\beta} \rangle $ is of type $A_{1}\times A_{1}.$ We will use the forms of $A_{1}\times A_{1}$
given in Section 5 to extend them to $B_2$. We use the following model of $B_2$ to fix the structure
constants. First of all, $B_2 \,\cong\, C_{2}, $ is a subalgebra of $sl(4,\mathbb{C})$. The root vectors for the
simple roots and their negatives are
$X_a \,=\, y \partial_z$,\, $X_b \,=\, x \partial_y +z \partial_w$,\, $X_{-a}\,=\, z \partial_y$,\,
$X_{-b} \,=\, y \partial_x + w \partial_{z} $. Thus, $\langle X_{a},\, X_{-a}/2\rangle \times \langle X_{a+2b},\, X_{-a-2b}/8 \rangle$ is a standard copy of $sl(2, \mathbb{C}) \times sl(2, \mathbb{C})$.

We want to write down the entire algebra in the canonical coordinates $x,\,y$ of $[ X_{a},\, X_{-a}/2 ]$ and
$[ X_{a+2b},\, X_{-a-2b}/8 ]$. As $X_\beta$, $X_{-\beta}$ must be common eigenvectors of $\partial_{x}$ and $\partial_{y}$, these must be of the forms
\begin{align}
    & X_{\beta}\,=\,e^{ax+by}(f_{1}(z)\partial_x + f_{2}(z)\partial_y + f_{3}(z)\partial_z  )\\
    &  X_{-\beta}\,=\,e^{-ax-by}(g_{1}(z)\partial_x + g_{2}(z)\partial_y + g_{3}(z)\partial_z  ).
\end{align}
Using the above model of $B_2$, we see that $a\,=\,-1/2$ and $b\,=\,1/2$. Here, by definition,
\begin{align}
    &X_{\alpha +\beta} \,=\, [X_{\alpha},\, X_{\beta}] ,\ \, X_{\alpha +2\beta} \,=\, [X_{\beta},\, X_{\alpha+\beta}] \\
    &X_{-\alpha -\beta} \,=\, [X_{-\alpha},\, X_{-\beta}] ,\ \,
    X_{-\alpha-2\beta} \,=\, [X_{-\beta},\, X_{-\alpha-\beta}] .
\end{align}
Moreover if $r+s \,\neq\, 0$, and $r+s$ is not a root, the $[X_r,\, X_s]\,=\,0$. In particular, $[X_{\beta},\,
X_{-\alpha}] \,=\, 0$,\, $[X_{\beta},\, X_{\alpha + 2\beta }] \,=\, 0$ and $[X_{-\beta},\, X_{\alpha}]
\,=\, 0$,\, $[X_{-\beta},\, X_{-\alpha-2\beta}] \,=\, 0$. The forms of $A_1 \times A_1$ are
\begin{enumerate}
\item $\langle e^{x}\partial_{x},\, e^{-x}\partial_x \rangle \times \langle e^{y}\partial_{y},\, e^{-y}\partial_y \rangle $
\item $\langle e^{x}\partial_{z} ,\,e^{-x} z^2/2+b)\partial_{z}\rangle  \times \langle e^{y}\partial_{y},\, e^{-y}\partial_y \rangle$
\item   $\langle e^{x}\partial_{z},\, e^{-x}(  z\partial_{x}+z^2/2 \partial_{z})  \rangle  \times \langle e^{y}
( \partial_x +l \partial_y + z\partial_z) ,\, e^{-x}(\partial_x -l\partial_y+z\partial_z) \rangle$
\item   $\langle e^{x}\partial_{z} , \,e^{-x}(  z\partial_{x}+a\partial_y+(z^2/2+b) \partial_{z})  \rangle
\times \langle e^{y} ( \partial_x - \partial_y + (z+a)\partial_z) ,\, e^{-y}(\partial_x +\partial_y+(z-a)\partial_z) \rangle$
where $a^2 +2b\, =\,0.$
\end{enumerate}

Using the commutation relations, we see that forms (1) and (2) of $A_{1} \times A_{1}$ cannot be extended
to $B_2$ as one gets $X_{\beta}\,=\,0$ or $X_{\alpha+\beta} \,=\,0$.

Assume that the embedding of $\langle X_{\alpha} ,\, X_{-\alpha}\rangle \times \langle X_{\alpha+2\beta} ,\,
X_{-\alpha-2\beta}\rangle $ is of type (3). Now if $\alpha,\, \beta$ is a simple set of roots then
so is $\alpha+2\beta ,\,-\beta$. Therefore, we may assume that
\begin{align}
    & X_{\alpha} \,=\, e^{x}(l\partial_{x}+\partial_y + z\partial_z) ,\ \, X_{-\alpha}
\,=\, e^{-x}(-l\partial_{x}+\partial_y + z\partial_z), \nonumber \\
    & X_{\alpha+2\beta } \,=\, e^{y}\partial_z,\ \,  X_{-\alpha-2\beta} \,=\, e^{-y} (z\partial_y +z^2/2\partial_z) ,\nonumber \\
    & X_{\beta} \,=\, e^{\frac{-x+y}{2}} (f(z) \partial_x + g(z) \partial_y + h(z) \partial_z).
\end{align}
Using $[X_{\alpha} , \,X_{\alpha+\beta}] \,=\,0, $ we see that $f'\,=\,0\,=\,g'$ and $h'\,=\,g$. Thus,
$f\,=\,c_{1},\, g\,=\,c_{2},\, h\,=\,c_{2}z+c_{3}$. Using $[X_{\beta},\,X_{-\alpha}] \,=\,0$, we see that
\begin{align}
    &\frac{c_{1}(l-1)}{2}\,=\,0,\ \, c_{3}(-l+1) \,=\,0,\ \, c_{1}\,=\,\frac{-c_{2}(l+1)}{2}.
\end{align}
Now $l-1\,\neq\, 0$ gives $X_{\beta}\,=\, e^{\frac{-x+y}{2}} ( \partial_y +z\partial_z)$, $l\,=\,-1$ and
$X_{\alpha}\,=\,e^{x}(-\partial_x +\partial_y+z\partial_z)$. This implies that $X_{\alpha+2\beta}
\,=\,0$. Therefore, $l$ must be 1 and $X_{\alpha}\,=\,e^{x}(\partial_x +\partial_y+z\partial_z)$ and
$X_{\beta} \,=\,  e^{\frac{-x+y}{2}} (c_{1} \partial_x - c_1 \partial_y +(c_3 -c_1z) \partial_z) $.

If $c_1 \,=\,0,$ then $X_{\beta} \,=\,  e^{\frac{-x+y}{2}} \partial_z $ (up to a multiple). But then $X_{\alpha+2\beta}
\,=\,0. $ Therefore, $c_1\,\neq\, 0$ and we may take
$$
    X_{\beta} \,\,=\,\,  e^{\frac{-x+y}{2}}(\partial_x - \partial_y -(z+\lambda) \partial_z).
$$
Now,
$$
X_{-\beta}\,\,=\,\,  e^{\frac{x-y}{2}}(\widetilde{f}(z)\partial_x +\widetilde{g}(z) \partial_y +\tilde{h}(z) \partial_z).
$$
Using $[X_{\alpha},\, X_{-\beta} ]\,=\,0$, we get the system
$$
-\widetilde{f}+z \widetilde{f}'\,=\,0,\ -\widetilde{f}+z\widetilde{g}'\,=\,0,\ -\widetilde{f}z +
z\widetilde{h}'-\widetilde{h}\,=\,0.
$$
Solving this system, we get
$$
X_{-\beta}\,\, =\,\,  e^{\frac{x-y}{2}}(k_1 z\partial_x +(k_1 z+k_2)  \partial_y + (k_1 z^2 +k_3 z)  \partial_z).
$$
Keeping in mind that $[X_{\beta},\, X_{-\beta}]$ is a non-zero multiple of $\partial_x - \partial_y$,
we see that $k_1 \,\neq\, 0.$ Hence, we may assume that
$$
    X_{-\beta} \,=\,  e^{\frac{x-y}{2}}(z\partial_x +(z+\mu)  \partial_y + (z^2 +\tau z)  \partial_z).
$$
As $[X_{-\beta},\, X_{\alpha+\beta}]$ is a non-zero multiple of $X_{\alpha} $, we see that $\mu +2 \lambda \,
=\,1$,\, $2\tau \,= \,\mu$ and $\lambda \,=\, 1/2-\tau$. Thus,
$$
X_{-\beta} \,=\,  e^{\frac{x-y}{2}}(z\partial_x +(z+2\tau)  \partial_y + (z^2 +\tau z)  \partial_z).
$$
Now using that $[X_{\beta},\, X_{-\beta}]$ is a non-zero multiple of  $\partial_x- \partial_y$
it is deduced that $4\tau -1\,=\,0$. Thus, $\tau = 1/4$, $\lambda =1/4$ and $\mu = 1/2$. Therefore,
\begin{align}
    &X_{-\beta} \,=\,  e^{\frac{x-y}{2}}\left (z\partial_x +\left(z+\frac{1}{2}\right)  \partial_y + \left (z^2 +\frac{z}{4} \right)  \partial_z \right),\nonumber \\
    &X_{\beta}\,=\, e^{\frac{-x+y}{2}  } \left(\partial_x - \partial_y - \left ( z+ \frac{1}{4} \right) \right),\nonumber \\
    & X_{\alpha}\,=\, e^{x} (\partial_x + \partial_y + z\partial_z),
    X_{-\alpha}= e^{-x} (-\partial_x + \partial_y + z\partial_z).\nonumber
\end{align}

Using similar computations, we see that if the embedding of
$\langle X_{\alpha} ,\, X_{-\alpha}\rangle \times \langle X_{\alpha+2\beta} ,\, X_{-\alpha-2\beta}\rangle $ is of
type (4); then
\begin{align}
    &X_{\beta} \,=\,  e^{\frac{-x+y}{2}}(\partial_y +(z-a)\partial_z), \nonumber \\
    &X_{-\beta} \,=\,  e^{\frac{x-y}{2}}\left (\partial_x +\left(\frac{z+a}{2a}-1\right)  \partial_y + \left (\frac{(z+a)^2}{2a} - (z+a)\right)  \partial_z \right).
\end{align}
This completes the proof of the proposition.
\end{proof}

\section{Non-embedding of $G_2$ in $V(\mathbb{C}^3) $}

In this section, we want to show that there is no embedding of the Lie algebra $G_2$ in
$V(\mathbb{C}^3) $. If the simple roots of $G_2$ are $\alpha$ and $\beta$ with $\alpha$ the long root, then the positive roots are $\alpha$,\, $\beta$,\, $\alpha + \beta$,\,
$\alpha+2\beta$,\, $\alpha+3\beta$\, $2\alpha+3\beta.$

The long roots $\alpha,\, \alpha+3\beta,\, 2\alpha+3\beta$ and their negatives gives a copy of $A_2$. Assume
that there is a representation of $G_2$ in $V(\mathbb{C}^3) $. We see that $\langle X_{\alpha},\,
X_{-\alpha},\, X_{\alpha+3\beta} ,\, X_{-\alpha-3\beta}\rangle $ gives a copy of $A_2$ in $V(\mathbb{C}^3) $. The three forms of $A_2$ in $V(\mathbb{C}^3) $ are
\begin{enumerate}
\item $X_{a} \,=\, \partial_y,\, X_b \,= \,y\partial_x + \partial_z ,\,
X_{-a} \,=\,-xy\partial_x -y^2 \partial_y+(yz-x) \partial_z ,\, X_{-b} \,=\, x\partial_y-z^2 \partial_z$
\item $X_{a} \,=\, \partial_y,\ X_b \,=\, y\partial_x ,\ X_{-a} \,=\,-xy\partial_x -y^2 \partial_y,\
 X_{-b} \,=\, x\partial_y$
\item $X_{a} \,=\, \partial_y,\, X_b \,=\, y\partial_x ,\, X_{-a} \,=\,-xy\partial_x -y^2 \partial_y+y\partial_z,\,
X_{-b} \,=\, x\partial_y$.
\end{enumerate}
Using these three forms, we find that
\begin{equation}
    X_{\alpha +\beta} \,\,=\,\, f(z) \partial_x + g(z) \partial_y + h(z) \partial_z.
\end{equation}
As $[X_{-\alpha},\, X_{\alpha+\beta} ]\,=\,N_{-\alpha, \alpha+\beta}X_{\beta}$, we can take that $X_{\beta} \,=
\, [X_{-\alpha},\, X_{\alpha+\beta}]$. In every case, we find that $X_{\alpha+2\beta} \,=\,0$ or $X_{2\alpha+3\beta}
\,=\,0$ ---- a contradiction. Thus, there is no representation of $G_2$ in $V(\mathbb{C}^3).$

\end{document}